\begin{document}
\newcommand{\dyle}{\displaystyle}
\newcommand{\R}{{\mathbb{R}}}
\newcommand{\Hi}{{\mathbb H}}
\newcommand{\Ss}{{\mathbb S}}
\newcommand{\N}{{\mathbb N}}
\newcommand{\Rn}{{\mathbb{R}^n}}
\newcommand{\F}{{\mathcal F}}
\newcommand{\ieq}{\begin{equation}}
\newcommand{\eeq}{\end{equation}}
\newcommand{\ieqa}{\begin{eqnarray}}
\newcommand{\eeqa}{\end{eqnarray}}
\newcommand{\ieqas}{\begin{eqnarray*}}
\newcommand{\eeqas}{\end{eqnarray*}}
\newcommand{\f}{\hat{f}}
\newcommand{\Bo}{\put(260,0){\rule{2mm}{2mm}}\\}
\newcommand{\1}{\mathlarger{\mathlarger{\mathbbm{1}}}}


\theoremstyle{plain}
\newtheorem{theorem}{Theorem} [section]
\newtheorem{corollary}[theorem]{Corollary}
\newtheorem{lemma}[theorem]{Lemma}
\newtheorem{proposition}[theorem]{Proposition}
\def\neweq#1{\begin{equation}\label{#1}}
\def\endeq{\end{equation}}
\def\eq#1{(\ref{#1})}


\theoremstyle{definition}
\newtheorem{definition}[theorem]{Definition}
\newtheorem{remark}[theorem]{Remark}
\numberwithin{figure}{section}

\title[Kinetic energy of the Langevin particle]{Kinetic energy of the Langevin particle}

\author[C. Escudero]{Carlos Escudero}
\address{}
\email{}

\keywords{Stochastic differential equations, uniqueness of solution, multiplicity of solutions, It\^o vs Stratonovich dilemma.
\\ \indent 2010 {\it MSC: 60H05, 60H10, 60J60, 82C31}}

\date{\today}

\begin{abstract}
We compute the kinetic energy of the Langevin particle using different approaches. We build stochastic differential equations that
describe this physical quantity based on both the It\^o and Stratonovich stochastic integrals. It is shown that the It\^o equation
possesses a unique solution whereas the Stratonovich one possesses infinitely many, all but one absent of physical meaning. We discuss
how this fact matches with the existent discussion on the It\^o vs Stratonovich dilemma and the apparent preference towards the
Stratonovich interpretation in the physical literature.
\end{abstract}
\maketitle

\section{Introduction}

The position of the Langevin particle obeys the stochastic differential equation
\begin{eqnarray}\nonumber
m \, \frac{d^2 X_t}{dt^2} &=& -\gamma \, \frac{d X_t}{dt} + \sigma \, \xi_t, \\ \nonumber
\left. X_t \right|_{t=0} &=& X_0, \\ \nonumber
\left. \frac{d X_t}{dt} \right|_{t=0} &=& V_0,
\end{eqnarray}
where $\xi_t$ is Gaussian white noise and $m, \gamma, \sigma >0$.
Clearly, this is Newton second law for a particle subjected to both viscous damping and a random force.
It is a classical model for the random dispersal of a particle~\cite{langevin,uo}, that can be considered as a refined version
of Brownian Motion, and hence the alternative name \emph{Physical Brownian Motion}~\cite{fgl}.

Let $(\Omega,\mathcal{F},\{\mathcal{F}_t\}_{t \ge 0},\mathbb{P})$ be a filtered probability space
completed with the $\mathbb{P}-$null sets
in which a Wiener Process $\{W_t\}_{t \ge 0}$ is defined; moreover assume
$\mathcal{F}_t \supset \sigma(\{W_s, 0 \le s \le t\})$.
This equation can be written in the precise manner
\begin{eqnarray}\label{langevin}
m \, d V_t &=& -\gamma \, V_t \, dt + \sigma \, dW_t, \\ \nonumber
\frac{d X_t}{dt} &=& V_t, \\ \nonumber
\left. V_t \right|_{t=0} &=& V_0, \\ \nonumber
\left. X_t \right|_{t=0} &=& X_0,
\end{eqnarray}
where $X_0, V_0 \in L^2(\Omega)$ are $\mathcal{F}_0-$measurable random variables. Obviously, $V_t$ is the velocity of the Langevin particle.
It is straightforward to check that the classical theorem of existence and uniqueness of solution of stochastic differential
equations applies to this model~\cite{kuo,oksendal}.

If we formally take the limit $m \searrow 0$ we arrive at the model
\begin{eqnarray}\nonumber
\gamma \, \frac{d X_t}{dt} &=& \sigma \, \xi_t, \\ \nonumber
\left. X_t \right|_{t=0} &=& X_0,
\end{eqnarray}
which can be translated to the precise version
\begin{eqnarray}\nonumber
d X_t &=& \frac{\sigma}{\gamma} \, dW_t, \\ \nonumber
\left. X_t \right|_{t=0} &=& X_0;
\end{eqnarray}
its solution reads
\begin{equation}\nonumber
X_t= X_0 + \frac{\sigma}{\gamma}W_t.
\end{equation}
Clearly, its derivative is not well-defined, at least as a (function-valued) stochastic process and, moreover, for any $\Delta t >0$, we find
\begin{eqnarray}\nonumber
\frac{X_{t+\Delta t}-X_t}{\Delta t} &=& \frac{\sigma}{\gamma} \frac{W_{t+\Delta t}-W_t}{\Delta t} \Rightarrow \\ \nonumber
\mathbb{E}\left[ \left(\frac{X_{t+\Delta t}-X_t}{\Delta t}\right)^2 \right] &=& \frac{\sigma^2}{\gamma^2}
\frac{\mathbb{E}[(W_{t+\Delta t}-W_t)^2]}{\Delta t^2} \\ \nonumber
&=& \frac{\sigma^2}{\gamma^2}\frac{1}{\Delta t} \\ \nonumber
&\hspace{1cm} & \\ \nonumber
&\underset{\Delta t \searrow 0}{\longrightarrow}& \infty,
\end{eqnarray}
so the mean kinetic energy of the Brownian model is not well-defined.
In the next section we will show how this deficiency of the Brownian model can be solved with the Langevin model.

\section{Computation of the kinetic energy}
\label{kinetic}

The kinetic energy of the Langevin particle is
$$
K_t = \frac12 m V_t^2.
$$
To compute it, we can simple solve for $V_t$ to find
$$
V_t = e^{-(\gamma/m) t} V_0 + \frac{\sigma}{m} \, \int_0^t e^{(\gamma/m)(s-t)} \, dW_s;
$$
and therefore
\begin{equation}\label{kenergy}
K_t = \frac12 m \left[ e^{-(\gamma/m) t} V_0 + \frac{\sigma}{m} \, \int_0^t e^{(\gamma/m)(s-t)} \, dW_s \right]^2.
\end{equation}
We can compute its mean value
\begin{eqnarray}\nonumber
\mathbb{E}(K_t) &=& \frac12 m \, \mathbb{E} \left\{ \left[ e^{-(\gamma/m) t} V_0 + \frac{\sigma}{m} \, \int_0^t e^{(\gamma/m)(s-t)} \, dW_s \right]^2 \right\} \\ \nonumber
&=& \frac12 m \, \mathbb{E} \left( e^{-2(\gamma/m) t} V_0^2 \right) \\ \nonumber
&& + \sigma \, \mathbb{E} \left[ e^{-(\gamma/m) t} V_0 \, \int_0^t e^{(\gamma/m)(s-t)} \, dW_s \right] \\ \nonumber
&& + \frac12 \frac{\sigma^2}{m} \, \mathbb{E} \left\{ \left[ \int_0^t e^{(\gamma/m)(s-t)} \, dW_s \right]^2 \right\} \\ \nonumber
&=& \frac12 m \, e^{-2(\gamma/m) t} \, \mathbb{E} \left( V_0^2 \right) \\ \nonumber
&& + \sigma \, e^{-(\gamma/m) t} \, \mathbb{E} \left( V_0 \right) \, \mathbb{E} \left[ \int_0^t e^{(\gamma/m)(s-t)} \, dW_s \right] \\ \nonumber
&& + \frac12 \frac{\sigma^2}{m} \int_0^t e^{2(\gamma/m)(s-t)} \, ds \\ \nonumber
&=& \frac12 m \, e^{-2(\gamma/m) t} \mathbb{E} \left( V_0^2 \right) + \frac{\sigma^2}{4\gamma} \left[ 1 - e^{-2(\gamma/m) t} \right] \\ \nonumber
&\hspace{1cm} & \\ \nonumber
&\underset{t \nearrow \infty}{\longrightarrow}& \frac{\sigma^2}{4\gamma} >0,
\end{eqnarray}
where we have used, in the third equality, the independence of $V_0$ and the Wiener integral (by assumption on $V_0$), and the It\^o
isometry, while the zero mean property of the Wiener integral has been used in the fourth.

Also note that
\begin{equation}\nonumber
\lim_{m \searrow 0} \mathbb{E}(K_t) = \frac{\sigma^2}{4\gamma},
\end{equation}
so the vanishing mass limit of the Langevin model allows to define a value of the mean kinetic energy for the Brownian model.
Moreover, this value coincides with the long time limit of the mean kinetic energy of the Langevin particle.
This coincidence matches well with previous analyses that showed the agreement of the long time limit of the Langevin model
and the Brownian one with respect to the dispersal of the trajectories~\cite{gillespie}.

\section{Direct computation}
\label{direct}

One could instead directly compute the stochastic differential equation
obeyed by the stochastic process $K_t$.
But this equation cannot be interpreted samplewise as~\eqref{langevin}
and requires to incorporate a notion of stochastic integration (different from the Wiener one);
herein we consider the stochastic integrals of It\^o~\cite{ito1,ito2} and Stratonovich~\cite{stratonovich}
as has been traditionally done in the
physical literature, and what coincides with the historical development.
From now on let us assume $V_0 \in L^4(\Omega)$ is a $\mathcal{F}_0-$measurable random variable.
Using It\^o calculus and following~\cite{kampen}
one arrives at
\begin{equation}\label{kito}
d K_t = \frac{\sigma^2}{2m} dt -2\frac{\gamma}{m} \, K_t \, dt + \sqrt{2 \frac{\sigma^2}{m} \, K_t} \, dW_t,
\qquad \left. K_t \right|_{t=0}= \frac12 m V_0^2;
\end{equation}
alternatively, using Stratonovich calculus and following~\cite{kampen} one gets
\begin{equation}\label{kstrat}
d K_t = -2\frac{\gamma}{m} \, K_t \, dt + \sqrt{2 \frac{\sigma^2}{m} \, K_t} \circ dW_t, \qquad \left. K_t \right|_{t=0}= \frac12 m V_0^2.
\end{equation}

\begin{remark}
There is a well known formula that connects It\^o and Stratonovich stochastic differential equations by means of a drift redefinition~\cite{kuo,oksendal}.
If one {\it formally} applied this formula to the present situation one would be tempted to conclude that equations~\eqref{kito} and~\eqref{kstrat}
are equivalent. However, this formula requires the continuous differentiability of the diffusion of the given stochastic differential equations,
a requirement that is not fulfilled by the square root diffusions of the present models. The lack of validity of this formula in such situations was
demonstrated in~\cite{ce}, and in the following it will become apparent again that it may lead to the creation or destruction of solutions when
applied out of its range of validity.
\end{remark}

We note that equation~\eqref{kito} possesses a unique solution, which is both strong and global, by the Wanatabe-Yamada theorem~\cite{wy};
however, this theorem is not applicable to equation~\eqref{kstrat} or, in general, to stochastic differential equations in Stratonovich form~\cite{ce}.
It is a simple exercise of stochastic calculus to check that formula~\eqref{kenergy} solves both
equations~\eqref{kito} and~\eqref{kstrat}. On the other hand, consider equation~\eqref{kstrat} subject to the initial condition
\begin{equation}\label{kstrat0}
\left. K_t \right|_{t=0}=0;
\end{equation}
it is clear that $K_t=0$ is an absorbing state for this equation and, given this initial condition, it is a global solution to it too.
Nevertheless, it is not an absorbing state for equation~\eqref{kito}, which we remind possesses a unique solution. Clearly, the stochastic
differential equation~\eqref{kstrat} subject to the initial condition~\eqref{kstrat0} possesses at least two solutions: $K_t=0$ and~\eqref{kenergy}. Actually, it is easy to combine both
to get the family of solutions
\begin{equation}\nonumber
K_t = \frac{\sigma^2}{2m} \left[ \int_{\lambda}^{t} e^{(\gamma/m)(s-t)} \, dW_s \right]^2 \,
\mathlarger{\mathlarger{\mathbbm{1}}}_{t > \lambda},
\end{equation}
where $\lambda \ge 0$ is an arbitrary parameter; that is, equation~\eqref{kstrat} subject to~\eqref{kstrat0} admits an uncountable number of solutions.
This fact, apparently, remained unseen before~\cite{kampen,west}.

Now we can focus again on the general case~\eqref{kstrat}. If we choose an $\omega \in \Omega$ such that $V_0(\omega)=0$ then the
problem reduces to the previous case; so we consider instead those samples $\omega \in \Omega$ such that $V_0(\omega) \neq 0$ and
consequently $V_0(\omega)^2 >0$. For such an $\omega$ the equation~\eqref{kstrat} possesses a unique solution up to some
stopping time $\mathcal{T}(\omega)$ that is positive almost surely; for such a time interval the solution is given by~\eqref{kenergy}.
Given that this equation falls under the assumptions of the classical existence and uniqueness theorem~\cite{kuo,oksendal} while $K_t>0$, we conclude
$$
\mathcal{T}(\omega) = \inf \{ t>0 : K_t=0 \} =: T_1(\omega).
$$
Now we can construct at least the following family of solutions to~\eqref{kstrat}:
\begin{eqnarray}\nonumber
K_t &=& \frac12 m \left[ e^{-(\gamma/m) t} V_0 + \frac{\sigma}{m} \, \int_0^t e^{(\gamma/m)(s-t)} \, dW_s \right]^2
\, \mathlarger{\mathlarger{\mathbbm{1}}}_{t < T_1(\omega)} \\ \nonumber
& & + \frac{\sigma^2}{2m} \left[ \int_{\lambda+T_1(\omega)}^{t} e^{(\gamma/m)(s-t)} \, dW_s \right]^2 \,
\mathlarger{\mathlarger{\mathbbm{1}}}_{t > \lambda + T_1(\omega)},
\end{eqnarray}
where $\lambda \ge 0$ is arbitrary.

Additionally define recursively the family of stopping times
$$
T_n := \inf \{ t>T_{n-1} + \lambda_{n-1} + \tau_{n-1} : K_t=0 \}, \quad n=2,3,\cdots,
$$
where $\{\tau_n\}_{n=1}^\infty$ is an arbitrary sequence of almost surely positive,
$L^0(\Omega)$, and $\mathcal{F}_{T_{n}(\omega)}-$measurable random variables.
This allows to extend the family of solutions to
\begin{eqnarray}\nonumber
K_t &=& \frac12 m \left[ e^{-(\gamma/m) t} V_0 + \frac{\sigma}{m} \, \int_0^t e^{(\gamma/m)(s-t)} \, dW_s \right]^2
\, \mathlarger{\mathlarger{\mathbbm{1}}}_{t < T_1(\omega)} \\ \nonumber & & +
\frac{\sigma^2}{2m} \left[ \int_{\lambda_1+T_1(\omega)}^{t} e^{(\gamma/m)(s-t)} \, dW_s \right]^2 \,
\mathlarger{\mathlarger{\mathbbm{1}}}_{\lambda_1 + T_1(\omega)<t<T_2(\omega)}
\\ \nonumber & & +
\frac{\sigma^2}{2m} \left[ \int_{\lambda_2+T_2(\omega)}^{t} e^{(\gamma/m)(s-t)} \, dW_s \right]^2 \,
\mathlarger{\mathlarger{\mathbbm{1}}}_{\lambda_2 + T_2(\omega)<t<T_3(\omega)}
\\ \nonumber & & + \cdots
\\ \nonumber & & +
\frac{\sigma^2}{2m} \left[ \int_{\lambda_n+T_n(\omega)}^{t} e^{(\gamma/m)(s-t)} \, dW_s \right]^2 \,
\mathlarger{\mathlarger{\mathbbm{1}}}_{\lambda_n + T_n(\omega)<t<T_{n+1}(\omega)}
\\ \nonumber & & + \cdots,
\end{eqnarray}
where $\{\lambda_n\}_{n=1}^\infty$ is an arbitrary sequence of non-negative real numbers.

Finally, we can build yet another extension of our set of solutions to
\begin{eqnarray}\nonumber
K_t &=& \frac12 m \left[ e^{-(\gamma/m) t} V_0 + \frac{\sigma}{m} \, \int_0^t e^{(\gamma/m)(s-t)} \, dW_s \right]^2
\, \mathlarger{\mathlarger{\mathbbm{1}}}_{t < T_1(\omega)}
\\ \nonumber & & +
\sum_{n=1}^{\infty} \frac{\sigma^2}{2m} \left[ \int_{\lambda_n+T_n(\omega)}^{t} e^{(\gamma/m)(s-t)} \, dW_s \right]^2 \,
\mathlarger{\mathlarger{\mathbbm{1}}}_{\lambda_n + T_n(\omega)<t<T_{n+1}(\omega)},
\end{eqnarray}
if $V_0(\omega) \neq 0$, for any sequence $\{\lambda_n\}_{n=1}^\infty$ of almost surely non-negative,
$L^0(\Omega)$, and $\mathcal{F}_{T_n(\omega)}-$measurable random variables, and
\begin{eqnarray}\label{kt1}
K_t &=& \frac12 m \left[ e^{-(\gamma/m) t} V_0 + \frac{\sigma}{m} \, \int_0^t e^{(\gamma/m)(s-t)} \, dW_s \right]^2
\, \mathlarger{\mathlarger{\mathbbm{1}}}_{t < T_1'(\omega)}
\\ \nonumber & & +
\sum_{n=1}^{N} \frac{\sigma^2}{2m} \left[ \int_{\mu_n+T_n'(\omega)}^{t} e^{(\gamma/m)(s-t)} \, dW_s \right]^2 \,
\mathlarger{\mathlarger{\mathbbm{1}}}_{\mu_n + T_n'(\omega)<t<T_{n+1}'(\omega)},
\end{eqnarray}
if $V_0(\omega) \neq 0$, for any finite sequence $\{\mu_n\}_{n=1}^N$, $N=1,2,\cdots$, of almost surely non-negative,
$L^0(\Omega)$, and $\mathcal{F}_{T_n'(\omega)}-$measurable
random variables, where
$$
T_1' :=T_1, \quad T_n' := \inf \{ t>T_{n-1}' + \mu_{n-1} + \tau_{n-1}' : K_t=0 \}, \quad n=2,3,\cdots, N+1,
$$
where $\{\tau_n'\}_{n=1}^N$ is an arbitrary finite sequence of almost surely positive,
$L^0(\Omega)$, and $\mathcal{F}_{T_{n}'(\omega)}-$measurable random variables.
The solution
\begin{equation}\label{kt2}
K_t = \frac12 m \left[ e^{-(\gamma/m) t} V_0 + \frac{\sigma}{m} \, \int_0^t e^{(\gamma/m)(s-t)} \, dW_s \right]^2
\, \mathlarger{\mathlarger{\mathbbm{1}}}_{t < T_1(\omega)},
\end{equation}
is also acceptable again if $V_0(\omega) \neq 0$. If $V_0(\omega) = 0$ we have the solutions:
\begin{eqnarray}\nonumber
K_t &=& \frac{\sigma^2}{2m} \left[ \int_{\bar{\lambda}_0}^{t} e^{(\gamma/m)(s-t)} \, dW_s \right]^2 \,
\mathlarger{\mathlarger{\mathbbm{1}}}_{\bar{\lambda}_0 <t<S_{1}(\omega)} \\ \nonumber & & +
\sum_{n=1}^{\infty} \frac{\sigma^2}{2m} \left[ \int_{\bar{\lambda}_n+S_n(\omega)}^{t} e^{(\gamma/m)(s-t)} \, dW_s \right]^2 \,
\mathlarger{\mathlarger{\mathbbm{1}}}_{\bar{\lambda}_n + S_n(\omega)<t<S_{n+1}(\omega)},
\end{eqnarray}
and
\begin{eqnarray}\label{kt3}
K_t &=& \frac{\sigma^2}{2m} \left[ \int_{\bar{\mu}_0}^{t} e^{(\gamma/m)(s-t)} \, dW_s \right]^2 \,
\mathlarger{\mathlarger{\mathbbm{1}}}_{\bar{\mu}_0 <t<S_{1}'(\omega)} \\ \nonumber & & +
\sum_{n=1}^{N} \frac{\sigma^2}{2m} \left[ \int_{\bar{\mu}_n+S_n'(\omega)}^{t} e^{(\gamma/m)(s-t)} \, dW_s \right]^2 \,
\mathlarger{\mathlarger{\mathbbm{1}}}_{\bar{\mu}_n + S_n'(\omega)<t<S_{n+1}'(\omega)},
\end{eqnarray}
for any sequences (finite the second) $\{\bar{\lambda}_n\}_{n=0}^\infty$ and $\{\bar{\mu}_n\}_{n=0}^N$ ($N=1,2,\cdots$)
of non-negative, $L^0(\Omega)$, and respectively $\mathcal{F}_{S_n(\omega)}-$measurable and $\mathcal{F}_{S_n'(\omega)}-$measurable
random variables, and where
\begin{eqnarray}\nonumber
S_n &:=& \inf \{ t>S_{n-1} + \bar{\lambda}_{n-1} + \bar{\tau}_{n-1} : K_t=0 \}, \quad n=1,2,\cdots, \\ \nonumber
S_n' &:=& \inf \{ t>S_{n-1}' + \bar{\mu}_{n-1} + \bar{\tau}_{n-1}' : K_t=0 \}, \quad n=1,2,\cdots, N+1,
\end{eqnarray}
with $S_0 = \bar{\lambda}_{-1} + \bar{\tau}_{-1}$ and $S_0' = \bar{\mu}_{-1} + \bar{\tau}_{-1}'$,
where $\{\bar{\tau}_n\}_{n=0}^\infty$ and $\{\bar{\tau}_n'\}_{n=0}^N$ are arbitrary sequences (finite sequence the latter) of almost surely positive,
$L^0(\Omega)$, and respectively $\mathcal{F}_{S_{n}(\omega)}-$measurable and $\mathcal{F}_{S_{n}'(\omega)}-$measurable random variables;
also, $\bar{\lambda}_{-1}$ and $\bar{\mu}_{-1}$ are two arbitrary almost surely non-negative,
$L^0(\Omega)$, and $\mathcal{F}_{0}-$measurable random variables,
and $\bar{\tau}_{-1}$ and $\bar{\tau}_{-1}'$ are almost surely positive,
$L^0(\Omega)$, and respectively $\mathcal{F}_{\bar{\lambda}_{-1}}-$measurable and $\mathcal{F}_{\bar{\mu}_{-1}}-$measurable random variables.
Yet the solution $K_t \equiv 0$ is also acceptable.

\section{Time scale of the spurious solutions}

In this section we show that the appearance of the spurious solutions has a well-defined time scale.
Since we are discussing the mathematical properties of a physical model, it is important to establish
the observability of these solutions.

\begin{theorem}\label{thmfpt}
The mean first passage time to zero as a function of the initial kinetic energy $K$ is given by the formula:
\begin{equation}\nonumber
\mathfrak{T}(K) = \frac{m\sqrt{\pi}}{\gamma} \, \exp\left(\frac{2 \gamma}{\sigma^2} K \right) D_{+}\left(\frac{\sqrt{2 \gamma}}{\sigma} K^{1/2} \right)
-  \frac{2m}{\gamma} \int_0^{\frac{\sqrt{2 \gamma}}{\sigma}K^{1/2}} D_{-}(w)\, dw,
\end{equation}
where the Dawson integrals
\begin{eqnarray}\nonumber
D_{+}(\cdot) &:=& e^{-(\cdot)^2} \int_{0}^{(\cdot)} e^{u^2}du, \\ \nonumber
D_{-}(\cdot) &:=& e^{(\cdot)^2} \int_{0}^{(\cdot)} e^{-u^2}du.
\end{eqnarray}
\end{theorem}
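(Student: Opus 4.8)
The plan is to identify $\mathfrak{T}(K)$ as the solution of the mean-first-passage-time boundary value problem attached to the It\^o diffusion~\eqref{kito}, and then to collapse the resulting double integral into the stated combination of Dawson functions by two successive square-root substitutions. First I would write down the backward (Kolmogorov) equation. Since~\eqref{kito} has drift $b(K)=\sigma^2/(2m)-2\gamma K/m$ and squared diffusion coefficient $a(K)=2\sigma^2 K/m$, its generator is $\mathcal{L}=\tfrac12 a(K)\,\partial_{KK}+b(K)\,\partial_{K}$, so the mean first passage time to the origin solves $\mathcal{L}\mathfrak{T}=-1$, that is
\[
\frac{\sigma^2 K}{m}\,\mathfrak{T}''(K)+\left(\frac{\sigma^2}{2m}-\frac{2\gamma}{m}K\right)\mathfrak{T}'(K)=-1,
\]
subject to the absorbing condition $\mathfrak{T}(0)=0$ at the accessible lower boundary and to the natural boundary condition at $K=+\infty$.

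Next I would integrate this linear second-order equation by the scale-and-speed method. Writing $\alpha:=2\gamma/\sigma^2$, a direct computation gives the scale density $s(K)=K^{-1/2}e^{\alpha K}$ and the speed density $\mathfrak{m}(K)=(m/\sigma^2)K^{-1/2}e^{-\alpha K}$, so that $\mathcal{L}\mathfrak{T}=\mathfrak{m}(K)^{-1}(\mathfrak{T}'/s)'$. The equation $\mathcal{L}\mathfrak{T}=-1$ thus reads $(\mathfrak{T}'/s)'=-\mathfrak{m}(K)$; integrating once and imposing the natural boundary condition at infinity (which discards the exponentially growing homogeneous mode $\int^{K}s$) yields $\mathfrak{T}'(K)/s(K)=\int_{K}^{\infty}\mathfrak{m}(z)\,dz$, and a second integration with $\mathfrak{T}(0)=0$ gives
\[
\mathfrak{T}(K)=\int_0^{K}s(y)\left(\int_y^{\infty}\mathfrak{m}(z)\,dz\right)dy.
\]

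I would then reduce this to Dawson form. Substituting $z=w^2$ in the inner integral gives $\int_y^{\infty}\mathfrak{m}(z)\,dz=(2m/\sigma^2)\int_{\sqrt{y}}^{\infty}e^{-\alpha w^2}\,dw$, and $y=u^2$ in the outer integral turns the whole expression into $\frac{4m}{\sigma^2}\int_0^{\sqrt{K}}e^{\alpha u^2}\big(\int_u^{\infty}e^{-\alpha w^2}\,dw\big)\,du$. Rescaling $p=\sqrt{\alpha}\,u$, $q=\sqrt{\alpha}\,w$ and splitting $\int_u^{\infty}=\int_0^{\infty}-\int_0^{u}$ produces two terms with prefactor $\frac{4m}{\sigma^2\alpha}=\frac{2m}{\gamma}$: the first carries $\int_0^{\beta}e^{p^2}\,dp=e^{\beta^2}D_{+}(\beta)$ and the second carries $\int_0^{\beta}D_{-}(p)\,dp$, where $\beta=\frac{\sqrt{2\gamma}}{\sigma}K^{1/2}$ and $e^{\beta^2}=\exp(2\gamma K/\sigma^2)$. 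Collecting the constants (the $\tfrac{\sqrt\pi}{2}$ from $\int_0^\infty e^{-q^2}dq$ absorbing one factor of $2$) reproduces exactly the claimed formula.

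The hard part will be the justification of the boundary condition at infinity rather than the algebra: one must verify via the Feller test that $+\infty$ is a natural, inaccessible boundary for~\eqref{kito}, so that the homogeneous solution $\int^{K}s(y)\,dy\sim e^{\alpha K}K^{-1/2}$ is legitimately excluded, and then check that the selected improper integrals actually converge and represent a \emph{finite} expected hitting time. Concretely I would confirm $\int^{\infty}s=\infty$ together with $\int^{\infty}\mathfrak{m}<\infty$ near infinity, and the integrability of $s(y)\sim y^{-1/2}$ near the origin, the latter guaranteeing that the accessible boundary at $0$ is reached in finite mean time for every $K>0$.
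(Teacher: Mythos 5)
Your proposal is correct, and it reaches the stated formula by a genuinely different route from the paper's. The paper never touches the degenerate square-root diffusion: it observes that $K_t=\tfrac12 mV_t^2$ vanishes exactly when $V_t$ does, poses the mean-exit-time ODE for the \emph{non-degenerate} Ornstein--Uhlenbeck velocity on a finite interval with absorption at $0$ and reflection at $M$ (i.e. $\mathfrak{T}_M(0)=\partial_v\mathfrak{T}_M(M)=0$), solves that two-point problem explicitly (the intermediate answer is written with a ${}_2F_2$ hypergeometric function), sends $M\to\pm\infty$ identifying the limit as a monotone supremum, and only at the very end changes variables $K=\tfrac12 mv^2$. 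You instead work intrinsically in the energy variable with the generator of~\eqref{kito}, replace the truncation-and-limit step by the scale-and-speed representation $\mathfrak{T}(K)=\int_0^K s(y)\bigl(\int_y^\infty\mathfrak{m}(z)\,dz\bigr)dy$ with the growing homogeneous mode discarded by a natural-boundary condition at $+\infty$; your substitutions and constants check out ($4m/(\sigma^2\alpha)=2m/\gamma$ and $\sqrt{\alpha K}=\sqrt{2\gamma}\,K^{1/2}/\sigma$), so the double integral does collapse to exactly the claimed combination of $D_+$ and $D_-$. What the paper's detour through $V_t$ buys is precisely the avoidance of what you call ``the hard part'': for the OU process the MFPT equation is uniformly elliptic, $v=0$ is an interior point, and the only limiting argument is monotone, so no Feller classification of the degenerate endpoint $K=0$ and no justification of $\mathcal{L}\mathfrak{T}=-1$ for a square-root diffusion are required. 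What your route buys is a cleaner algebraic path (no truncation parameter, no ${}_2F_2$ intermediary) and a direct treatment of the quantity named in the theorem; to close it you must actually carry out the boundary analysis you outline ($0$ is accessible since $2\cdot\tfrac{\sigma^2}{2m}<\tfrac{2\sigma^2}{m}$, with $(S(y)-S(0))\,\mathfrak{m}(y)$ bounded near $0$, and $S(+\infty)=\infty$ at the far end) and then verify, e.g. by Dynkin's formula with localization, that the selected solution of the ODE really equals $\mathbb{E}_K[T_0]$ --- or simply invoke the paper's observation that the unique solution of~\eqref{kito} is $\tfrac12 mV_t^2$ and reduce to the velocity after all.
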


\begin{proof}
Since
\begin{equation}\label{mv2}
K_t=\frac12 m V_t^2,
\end{equation}
it is clear that $K_t=0 \Leftrightarrow V_t=0$, and $K_t>0 \Leftrightarrow V_t \neq 0$. Then the kinetic energy and the velocity become
zero simultaneously, and therefore it suffices to study the stopping time
$$
\mathfrak{T} := \inf \{ t>0 : V_t=0 \}.
$$
Since $V_t$ obeys the Ornstein-Uhlenbeck stochastic differential equation
$$
m \, d V_t = -\gamma \, V_t \, dt + \sigma \, dW_t,
$$
then $\mathfrak{T}_M=\mathfrak{T}_M(v)$ solves
$$
\frac{\sigma^2}{2m} \, \frac{d^2 \mathfrak{T}_M}{dv^2} - \gamma v \frac{d \mathfrak{T}_M}{dv} = -m,
$$
subjected to $\mathfrak{T}_M(0)=\partial_v\mathfrak{T}_M(M)=0$ and either $v \in [0,M]$ or $v \in [M,0]$ depending on the sign of $M$, with $M \neq 0$.
The solution reads
$$
\mathfrak{T}_M(v)= \frac{2m}{\gamma} \int_0^{\frac{\sqrt{m \gamma}}{\sigma}|v|}
\left[\exp\left(w^2- \frac{m \gamma}{\sigma^2}M^2\right) D_{-}\left(\frac{\sqrt{m \gamma}}{\sigma}|M|\right) - D_{-}(w)\right] dw.
$$
Now
\begin{eqnarray}\nonumber
\mathfrak{T}(v) &=& \lim_{M \to \pm \infty} \mathfrak{T}_M(v) \\ \nonumber
&=& \lim_{M \to \pm \infty} \frac{2m}{\gamma} \int_0^{\frac{\sqrt{m \gamma}}{\sigma}|v|}
\left[\exp\left(w^2- \frac{m \gamma}{\sigma^2}M^2\right) D_{-}\left(\frac{\sqrt{m \gamma}}{\sigma}|M|\right) - D_{-}(w)\right] dw \\ \nonumber
&=& \lim_{M \to \pm \infty} \frac{m}{\gamma} \Bigg\{ \sqrt{\pi} \left[2 \Phi\left(\frac{\sqrt{2m \gamma}}{\sigma}|M|\right)-1 \right]
\exp\left(\frac{m \gamma}{\sigma^2}v^2\right) D_{+}\left(\frac{\sqrt{m \gamma}}{\sigma}|v|\right) \\ \nonumber
&& \hspace{1.94cm} -\frac{m \gamma}{\sigma^2}v^2 \, {_2F_2}\left(1,1;3/2,2;\frac{m \gamma}{\sigma^2}v^2\right)\Bigg\} \\ \nonumber
&=&\sup_{M \in \mathbb{R}} \frac{m}{\gamma} \Bigg\{ \sqrt{\pi} \left[2 \Phi\left(\frac{\sqrt{2m \gamma}}{\sigma}|M|\right)-1 \right]
\exp\left(\frac{m \gamma}{\sigma^2}v^2\right) D_{+}\left(\frac{\sqrt{m \gamma}}{\sigma}|v|\right) \\ \nonumber
&& \hspace{1.51cm} -\frac{m \gamma}{\sigma^2}v^2 \, {_2F_2}\left(1,1;3/2,2;\frac{m \gamma}{\sigma^2}v^2\right) \Bigg\} \\ \nonumber
&=& \frac{m\sqrt{\pi}}{\gamma} \, \exp\left(\frac{m \gamma}{\sigma^2}v^2\right) D_{+}\left(\frac{\sqrt{m \gamma}}{\sigma}|v|\right)
-\frac{m^2}{\sigma^2} v^2 \, {_2F_2}\left(1,1;3/2,2;\frac{m \gamma}{\sigma^2}v^2\right) \\ \nonumber
&=& \frac{m\sqrt{\pi}}{\gamma} \, \exp\left(\frac{m \gamma}{\sigma^2}v^2\right) D_{+}\left(\frac{\sqrt{m \gamma}}{\sigma}|v|\right)
-  \frac{2m}{\gamma} \int_0^{\frac{\sqrt{m \gamma}}{\sigma}|v|} D_{-}(w)\, dw \\ \nonumber
&<& \infty
\end{eqnarray}
for all $v \in \mathbb{R}$, where
\begin{equation}\nonumber
\Phi(\cdot) := \text{Prob}(X \le \cdot) \quad \text{with} \quad X \sim \mathcal{N}(0,1),
\end{equation}
and ${_2F_2}(\cdot,\cdot;\cdot,\cdot;\cdot)$ is a generalized hypergeometric function~\cite{nist}.
The statement follows from changing variables as in~\eqref{mv2}.
\end{proof}

\begin{corollary}\label{zeroas}
Let the initial kinetic energy of a Langevin particle be positive. Then it becomes zero in finite mean time and,
in particular, it becomes zero in finite time almost surely.
\end{corollary}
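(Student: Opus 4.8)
The plan is to read off both assertions directly from Theorem~\ref{thmfpt} combined with one elementary measure-theoretic fact. First I would record that, by the identity $K_t = \frac12 m V_t^2$ used in the proof of Theorem~\ref{thmfpt}, the hitting time of zero for the kinetic energy coincides with that for the velocity,
$$
\mathfrak{T} = \inf\{t>0 : K_t = 0\} = \inf\{t>0 : V_t = 0\},
$$
and that the initial kinetic energy being positive is equivalent to $V_0 \neq 0$, equivalently $K > 0$.

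Next I would invoke the conclusion of Theorem~\ref{thmfpt}: the mean first passage time $\mathbb{E}(\mathfrak{T})$, written as the function $\mathfrak{T}(K)$ of the positive initial kinetic energy $K$, admits the stated closed form in terms of the Dawson integrals, and this expression was shown in the course of that proof to be finite for every value of its argument (the final strict inequality $< \infty$). Hence $\mathbb{E}(\mathfrak{T}) < \infty$, which is exactly the claim that the kinetic energy reaches zero in finite mean time.

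Finally, for the almost-sure statement I would use that $\mathfrak{T}$ is a nonnegative random variable: any nonnegative random variable with finite expectation is finite almost surely, since $\mathbb{P}(\mathfrak{T} = \infty) > 0$ would immediately force $\mathbb{E}(\mathfrak{T}) = \infty$ (equivalently, Markov's inequality gives $\mathbb{P}(\mathfrak{T} > t) \le \mathbb{E}(\mathfrak{T})/t \to 0$ as $t \to \infty$). When $V_0$ is genuinely random, conditioning on $V_0$ handles the reduction: on the event $\{V_0 \neq 0\}$ the conditional mean equals $\mathfrak{T}(K_0)$, which is finite sample by sample, so $\mathfrak{T} < \infty$ almost surely there as well.

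I do not anticipate a genuine obstacle, since the entire analytic difficulty has already been absorbed into Theorem~\ref{thmfpt}, where finiteness of the closed-form expression for all $v \in \mathbb{R}$ was established. The only point deserving a word of care is the passage from finite mean to almost-sure finiteness in the case of a random initial velocity, which is dispatched by the conditioning above rather than by asserting any global integrability of $\mathfrak{T}(K_0)$ in $V_0$.
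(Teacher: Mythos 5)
Your proposal is correct and follows essentially the same route as the paper, whose proof of this corollary is a one-line appeal to the proof of Theorem~\ref{thmfpt}: you are simply making explicit the finiteness of $\mathfrak{T}(v)$ established there and the standard deduction that a nonnegative random variable with finite mean is almost surely finite. The extra care you take with a random initial velocity (conditioning on $V_0$) is a reasonable elaboration of what the paper leaves implicit, not a different argument.
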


\begin{proof}
This corollary is directly implied by the proof of Theorem~\ref{thmfpt}.
\end{proof}

The explicit formula in the statement of Theorem~\ref{thmfpt}
describes the time scale of observability of the spurious solutions as a function of the initial kinetic energy.
We note that this quantity is not just always finite, as stated in Corollary~\ref{zeroas},
but it can also be arbitrarily small (depending on the initial condition),
as an initial kinetic energy $K=0$ leads to an immediate observation of them.
Therefore the unphysical properties of the spurious solutions in section~\ref{direct} can be observed in a well defined
time scale, which can be made arbitrarily short by tuning the initial condition.

\section{Long time behavior of a class of spurious solutions}
\label{ltb}

In this section we show that the spurious solutions we have constructed are indeed spurious as they do not obey important physical laws.
For technical reasons we limit our analysis to a subclass of the explicit solutions built in section~\ref{direct}.

\begin{theorem}\label{thltb}
Let $K_t$ be as in~\eqref{kt1}, \eqref{kt2}, \eqref{kt3}, or $K_t \equiv 0$;
moreover assume $\{\tau_n'\}_{n=1}^N$, $\{\mu_n\}_{n=1}^{N}$, $\{\bar{\tau}_n'\}_{n=-1}^N$, and $\{\bar{\mu}_n\}_{n=-1}^{N}$
are finite almost surely. Then
$$
\mathbb{E}(K_t) \underset{t \nearrow \infty}{\longrightarrow} 0.
$$
\end{theorem}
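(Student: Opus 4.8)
The plan is to exploit the fact that every solution listed in the statement consists of only \emph{finitely many} excursion terms, each multiplied by an indicator supported on a bounded random interval whose right endpoint is a stopping time that is finite almost surely. Consequently each such solution is eventually absorbed at $0$, and the task reduces to showing that each individual term contributes a vanishing amount to $\mathbb{E}(K_t)$ as $t\nearrow\infty$.

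First I would record the eventual absorption. Under the stated finiteness hypotheses the shift times $\mu_n,\tau_n'$ (resp. $\bar\mu_n,\bar\tau_n'$) are finite a.s., so by induction, together with Corollary~\ref{zeroas} (a strictly positive kinetic energy returns to $0$ in finite time a.s.), every stopping time $T_n'$, $T_{n+1}'$ (resp. $S_n'$, $S_{n+1}'$) appearing in \eqref{kt1}, \eqref{kt2}, \eqref{kt3} is finite almost surely. Since the indicators in these formulas have pairwise disjoint supports, at each fixed $t$ at most one term of $K_t$ is nonzero, and for $t$ beyond the last endpoint all indicators vanish; hence $K_t\to 0$ a.s. The case $K_t\equiv 0$ is of course immediate.

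Then, by linearity, $\mathbb{E}(K_t)$ is a finite sum of terms of the two shapes $\mathbb{E}[\,(\text{leading term})\,\mathbbm{1}_{t<T_1'}\,]$ and $\mathbb{E}\big[\frac{\sigma^2}{2m}(\int_{a}^{t}e^{(\gamma/m)(s-t)}\,dW_s)^2\,\mathbbm{1}_{a<t<b}\big]$, where $a$ is the (stopping-time) start and $b$ the (stopping-time) end of an excursion. For each such term I would apply the Cauchy--Schwarz inequality, splitting off the indicator, so that the term is bounded by $\big(\mathbb{E}[(\cdots)^4\mathbbm{1}_{a<t}]\big)^{1/2}\,\mathbb{P}(t<b)^{1/2}$. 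Conditioning on $\mathcal{F}_{a}$ and using that $e^{(\gamma/m)(\cdot-t)}$ is deterministic, the increment $\int_a^t e^{(\gamma/m)(s-t)}\,dW_s$ is, given $\mathcal{F}_a$ on $\{a<t\}$, centered Gaussian with variance $\int_a^t e^{2(\gamma/m)(s-t)}\,ds\le m/(2\gamma)$; its conditional fourth moment is therefore bounded by $3(m/2\gamma)^2$, uniformly in $t$. For the leading term the analogous fourth moment is finite uniformly in $t$ precisely because $V_0\in L^4(\Omega)$ and $V_0$ is independent of the Wiener integral. Since $b<\infty$ (resp. $T_1'<\infty$) a.s., $\mathbb{P}(t<b)\to 0$ (resp. $\mathbb{P}(t<T_1')\to 0$) as $t\nearrow\infty$, so every term vanishes in the limit; summing the finitely many contributions yields $\mathbb{E}(K_t)\to 0$.

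The step I expect to be the main obstacle is the uniform-in-$t$ control of the fourth moment, because the excursion integrand and its terminal indicator $\mathbbm{1}_{t<b}$ are driven by the \emph{same} Wiener path (the endpoint $b$ is a hitting time of the very process being integrated), so the two factors are not independent. The device that resolves this is to condition on $\mathcal{F}_a$ and invoke the independence of Brownian increments after the stopping time $a$: this renders the excursion integral conditionally Gaussian with a variance bounded uniformly in $t$, which decouples the two factors enough for Cauchy--Schwarz, leaving the decay entirely to the probability $\mathbb{P}(t<b)\to0$ guaranteed by the almost-sure finiteness of the first passage time established in Corollary~\ref{zeroas}.
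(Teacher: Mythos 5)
Your argument is correct, and it reaches the conclusion by a genuinely more direct route than the paper. The paper's proof has three stages: it first notes $K_t\to 0$ almost surely, then establishes a uniform-in-$t$ bound in $L^1(\Omega)$ and, separately, uniform integrability of the family $\{K_t\}_{t\ge 0}$ by bounding $\mathbb{E}\left(|K_t|\,\mathbbm{1}_{\mathcal{E}}\right)$ for an \emph{arbitrary} event $\mathcal{E}$ of small probability (via H\"older, the tower property, and the conditional Gaussian fourth moments of the Wiener integrals), and finally invokes the Vitali convergence theorem to upgrade almost-sure convergence to convergence of expectations. You bypass Vitali entirely: since each of the finitely many terms of $K_t$ is supported on a specific event $\{t<b\}$ with $b$ an almost surely finite stopping time, you apply Cauchy--Schwarz to each term against its own indicator, obtaining a bound of the form $\left(\mathbb{E}\left[(\cdots)^4\,\mathbbm{1}_{a<t}\right]\right)^{1/2}\mathbb{P}(t<b)^{1/2}$, in which the first factor is controlled uniformly in $t$ by exactly the same conditioning-on-$\mathcal{F}_a$ computation that the paper performs, and the second factor tends to zero. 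In effect you specialize the paper's uniform-integrability estimate to the events $\{t<b\}$, which is all that is needed here; this buys a shorter argument that dispenses with Vitali and even with the almost-sure convergence, at the cost of being tied to the explicit excursion structure of these particular solutions, whereas the paper's uniform-integrability scheme would transfer unchanged to any family satisfying the same moment bounds. The only hypothesis you should cite explicitly for the leading term is $V_0\in L^4(\Omega)$, which the paper assumes from Section~\ref{direct} onward and which you do invoke; with that, I see no gap.
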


\begin{proof}
The case $K_t \equiv 0$ is obvious. First note that
$$
K_t \underset{t \nearrow \infty}{\longrightarrow} 0 \qquad \text{almost surely}
$$
in all three cases~\eqref{kt1}, \eqref{kt2}, and \eqref{kt3} as a consequence of the assumptions on
$\{\tau_n'\}_{n=1}^N$, $\{\mu_n\}_{n=1}^{N}$, $\{\bar{\tau}_n'\}_{n=-1}^N$, and $\{\bar{\mu}_n\}_{n=-1}^{N}$,
and Corollary~\ref{zeroas}.

We start proving that all of these families of solutions are bounded in $L^1(\Omega)$ uniformly in $t$.
Let us begin with~\eqref{kt1} by noting
\begin{eqnarray}\label{young}
|K_t| &=& \Bigg| \frac12 m \left[ e^{-(\gamma/m) t} V_0 + \frac{\sigma}{m} \, \int_0^t e^{(\gamma/m)(s-t)} \, dW_s \right]^2
\, \mathlarger{\mathlarger{\mathbbm{1}}}_{t < T_1'(\omega)}
\\ \nonumber & & +
\sum_{n=1}^{N} \frac{\sigma^2}{2m} \left[ \int_{\mu_n+T_n'(\omega)}^{t} e^{(\gamma/m)(s-t)} \, dW_s \right]^2 \,
\mathlarger{\mathlarger{\mathbbm{1}}}_{\mu_n + T_n'(\omega)<t<T_{n+1}'(\omega)} \Bigg|
\\ \nonumber
&\le& m e^{-2(\gamma/m) t} V_0^2 + \frac{\sigma^2}{m} \left[ \int_0^t e^{(\gamma/m)(s-t)} \, dW_s \right]^2
\\ \nonumber & & +
\sum_{n=1}^{N} \frac{\sigma^2}{2m} \left[ \int_{\mu_n+T_n'(\omega)}^{t} e^{(\gamma/m)(s-t)} \, dW_s \right]^2 \,
\mathlarger{\mathlarger{\mathbbm{1}}}_{\mu_n + T_n'(\omega)<t},
\end{eqnarray}
by Young inequality.
Then in consequence
\begin{eqnarray}\nonumber
\mathbb{E}(|K_t|) &\le& m e^{-2(\gamma/m) t} \mathbb{E}(V_0^2)
+ \frac{\sigma^2}{m} \mathbb{E} \left\{ \left[ \int_0^t e^{(\gamma/m)(s-t)} \, dW_s \right]^2 \right\}
\\ \nonumber & & +
\sum_{n=1}^{N} \frac{\sigma^2}{2m} \mathbb{E} \left\{ \left[ \int_{\mu_n+T_n'(\omega)}^{t} e^{(\gamma/m)(s-t)} \, dW_s \right]^2 \,
\mathlarger{\mathlarger{\mathbbm{1}}}_{\mu_n + T_n'(\omega)<t} \right\} \\ \nonumber
&\le& m \mathbb{E}(V_0^2)
+ \frac{\sigma^2}{m} \int_0^t e^{2(\gamma/m)(s-t)} \, ds
\\ \nonumber & & +
\sum_{n=1}^{N} \frac{\sigma^2}{2m} \mathbb{E} \left\{ \mathbb{E} \left\{
\left[ \int_{\mu_n+T_n'(\omega)}^{t} e^{(\gamma/m)(s-t)} \, dW_s \right]^2 \,
\mathlarger{\mathlarger{\mathbbm{1}}}_{\mu_n + T_n'(\omega)<t}
\Bigg| \mathcal{F}_{\mu_n + T_n'(\omega)} \right\} \right\} \\ \nonumber
&=& m \mathbb{E}(V_0^2)
+ \frac{\sigma^2}{2 \gamma} \left[1-e^{-2(\gamma/m)t}\right]
\\ \nonumber & & +
\sum_{n=1}^{N} \frac{\sigma^2}{2m} \mathbb{E} \left\{
\int_{\mu_n+T_n'(\omega)}^{t} e^{2(\gamma/m)(s-t)} \, ds \,
\mathlarger{\mathlarger{\mathbbm{1}}}_{\mu_n + T_n'(\omega)<t}
\right\} \\ \nonumber
&\le& m \mathbb{E}(V_0^2)
+ \frac{\sigma^2}{2 \gamma}
\\ \nonumber & & +
\sum_{n=1}^{N} \frac{\sigma^2}{4 \gamma} \mathbb{E} \left\{
\left[1-e^{2(\gamma/m)(\mu_n + T_n'(\omega)-t)}\right] \,
\mathlarger{\mathlarger{\mathbbm{1}}}_{\mu_n + T_n'(\omega)<t}
\right\} \\ \nonumber
&\le& m \mathbb{E}(V_0^2)
+ \frac{\sigma^2}{2 \gamma} \left( 1 + \frac{N}{2} \right) \\ \nonumber
&<& \infty,
\end{eqnarray}
by the It\^o isometry and the tower property of conditional expectation.
Arguing analogously for~\eqref{kt2} we find
\begin{eqnarray}\nonumber
\mathbb{E}(|K_t|) &=& \mathbb{E} \left\{ \frac12 m \left[ e^{-(\gamma/m) t} V_0 + \frac{\sigma}{m} \, \int_0^t e^{(\gamma/m)(s-t)} \, dW_s \right]^2
\, \mathlarger{\mathlarger{\mathbbm{1}}}_{t < T_1(\omega)} \right\} \\ \nonumber
&\le& \mathbb{E} \left\{ m e^{-2(\gamma/m) t} V_0^2 + \frac{\sigma^2}{m} \left[ \int_0^t e^{(\gamma/m)(s-t)} \, dW_s \right]^2
\right\} \\ \nonumber
&=& m e^{-2(\gamma/m) t} \mathbb{E}(V_0^2) + \frac{\sigma^2}{m} \int_0^t e^{2(\gamma/m)(s-t)} \, ds \\ \nonumber
&\le& m \mathbb{E}(V_0^2) + \frac{\sigma^2}{2 \gamma} \left[ 1 - e^{-2(\gamma/m)t} \right] \\ \nonumber
&\le& m \mathbb{E}(V_0^2) + \frac{\sigma^2}{2 \gamma} \\ \nonumber
&<& \infty.
\end{eqnarray}
And for~\eqref{kt3} we get
\begin{eqnarray}\nonumber
\mathbb{E}(|K_t|) &=& \frac{\sigma^2}{2m} \mathbb{E} \left\{ \left[ \int_{\bar{\mu}_0}^{t} e^{(\gamma/m)(s-t)} \, dW_s \right]^2 \,
\mathlarger{\mathlarger{\mathbbm{1}}}_{\bar{\mu}_0 < t <S_{1}'(\omega)} \right\} \\ \nonumber & & +
\sum_{n=1}^{N} \frac{\sigma^2}{2m} \mathbb{E} \left\{ \left[ \int_{\bar{\mu}_n+S_n'(\omega)}^{t} e^{(\gamma/m)(s-t)} \, dW_s \right]^2 \,
\mathlarger{\mathlarger{\mathbbm{1}}}_{\bar{\mu}_n + S_n'(\omega)<t<S_{n+1}'(\omega)} \right\} \\ \nonumber
&\le& \frac{\sigma^2}{2m} \mathbb{E} \left\{ \left[ \int_{\bar{\mu}_0}^{t} e^{(\gamma/m)(s-t)} \, dW_s \right]^2 \,
\mathlarger{\mathlarger{\mathbbm{1}}}_{\bar{\mu}_0 < t} \right\} \\ \nonumber & & +
\sum_{n=1}^{N} \frac{\sigma^2}{2m} \mathbb{E} \left\{ \left[ \int_{\bar{\mu}_n+S_n'(\omega)}^{t} e^{(\gamma/m)(s-t)} \, dW_s \right]^2 \,
\mathlarger{\mathlarger{\mathbbm{1}}}_{\bar{\mu}_n + S_n'(\omega)<t} \right\} \\ \nonumber
&=& \frac{\sigma^2}{2m} \mathbb{E} \left\{ \mathbb{E} \left\{ \left[ \int_{\bar{\mu}_0}^{t} e^{(\gamma/m)(s-t)} \, dW_s \right]^2 \Bigg| \mathcal{F}_{\bar{\mu}_0} \right\}
\mathlarger{\mathlarger{\mathbbm{1}}}_{\bar{\mu}_0 < t} \right\} \\ \nonumber & & +
\sum_{n=1}^{N} \frac{\sigma^2}{2m} \mathbb{E} \left\{ \mathbb{E} \left\{ \left[ \int_{\bar{\mu}_n+S_n'(\omega)}^{t} e^{(\gamma/m)(s-t)} \, dW_s \right]^2 \Bigg| \mathcal{F}_{\bar{\mu}_n + S_n'(\omega)} \right\}
\mathlarger{\mathlarger{\mathbbm{1}}}_{\bar{\mu}_n + S_n'(\omega)<t} \right\} \\ \nonumber
&=& \frac{\sigma^2}{2m} \mathbb{E} \left\{ \int_{\bar{\mu}_0}^{t} e^{2(\gamma/m)(s-t)} \, ds \,
\mathlarger{\mathlarger{\mathbbm{1}}}_{\bar{\mu}_0 < t} \right\} \\ \nonumber & & +
\sum_{n=1}^{N} \frac{\sigma^2}{2m} \mathbb{E} \left\{
\int_{\bar{\mu}_n+S_n'(\omega)}^{t} e^{2(\gamma/m)(s-t)} \, ds \,
\mathlarger{\mathlarger{\mathbbm{1}}}_{\bar{\mu}_n + S_n'(\omega)<t} \right\} \\ \nonumber
&=& \frac{\sigma^2}{4\gamma} \mathbb{E} \left\{ \left[ 1-e^{2(\gamma/m)(\bar{\mu}_0 -t)} \right]
\mathlarger{\mathlarger{\mathbbm{1}}}_{\bar{\mu}_0 < t} \right\} \\ \nonumber & & +
\sum_{n=1}^{N} \frac{\sigma^2}{4 \gamma} \mathbb{E} \left\{
\left[ 1-e^{2(\gamma/m)(\bar{\mu}_n + S_n'(\omega) -t)} \right]
\mathlarger{\mathlarger{\mathbbm{1}}}_{\bar{\mu}_n + S_n'(\omega)<t} \right\} \\ \nonumber
&\le& \frac{\sigma^2}{4 \gamma} \left( N+1 \right) \\ \nonumber
&<& \infty,
\end{eqnarray}
again by the use of the tower property and the It\^o isometry.

For the next step consider $\mathcal{E} \in \mathcal{F}$ such that $\text{Prob}(\mathcal{E})=\delta>0$.
In the case of~\eqref{kt1} we have
\begin{eqnarray}\nonumber
\mathbb{E}\left( \, |K_t| \, \mathlarger{\mathlarger{\mathbbm{1}}}_{\mathcal{E}} \right)
&\le& m e^{-2(\gamma/m) t} \mathbb{E}\left(V_0^2 \, \mathlarger{\mathlarger{\mathbbm{1}}}_{\mathcal{E}} \right)
+ \frac{\sigma^2}{m} \mathbb{E} \left\{ \left[ \int_0^t e^{(\gamma/m)(s-t)} \, dW_s \right]^2
\mathlarger{\mathlarger{\mathbbm{1}}}_{\mathcal{E}} \right\}
\\ \nonumber & & +
\sum_{n=1}^{N} \frac{\sigma^2}{2m} \mathbb{E} \left\{ \left[ \int_{\mu_n+T_n'(\omega)}^{t} e^{(\gamma/m)(s-t)} \, dW_s \right]^2 \,
\mathlarger{\mathlarger{\mathbbm{1}}}_{\mu_n + T_n'(\omega)<t} \, \mathlarger{\mathlarger{\mathbbm{1}}}_{\mathcal{E}} \right\} \\ \nonumber
&\le& m \mathbb{E}\left( V_0^4 \right)^{1/2} \text{Prob}(\mathcal{E})^{1/2}
+ \frac{\sigma^2}{m} \mathbb{E} \left\{ \left[ \int_0^t e^{(\gamma/m)(s-t)} \, dW_s \right]^4
\right\}^{1/2} \text{Prob}(\mathcal{E})^{1/2}
\\ \nonumber & & +
\sum_{n=1}^{N} \frac{\sigma^2}{2m} \mathbb{E} \left\{ \left[ \int_{\mu_n+T_n'(\omega)}^{t} e^{(\gamma/m)(s-t)} \, dW_s \right]^4
\mathlarger{\mathlarger{\mathbbm{1}}}_{\mu_n + T_n'(\omega)<t} \right\}^{1/2} \text{Prob}(\mathcal{E})^{1/2} \\ \nonumber
&=& m \mathbb{E}\left( V_0^4 \right)^{1/2} \delta^{1/2}
+ \frac{\sqrt{3}\sigma^2}{2\gamma} \left[1-e^{-2(\gamma/m)t}\right]
\delta^{1/2}+ \sum_{n=1}^{N} \frac{\sigma^2}{2m} \times
\\ \nonumber & &
\mathbb{E} \left\{ \mathbb{E} \left\{
\left[ \int_{\mu_n+T_n'(\omega)}^{t} e^{(\gamma/m)(s-t)} \, dW_s \right]^4 \,
\Bigg| \mathcal{F}_{\mu_n + T_n'(\omega)} \right\}
\mathlarger{\mathlarger{\mathbbm{1}}}_{\mu_n + T_n'(\omega)<t}
\right\}^{1/2} \delta^{1/2} \\ \nonumber
&\le& m \mathbb{E}\left( V_0^4 \right)^{1/2} \delta^{1/2}
+ \frac{\sqrt{3}\sigma^2}{2\gamma}
\delta^{1/2}+ \sum_{n=1}^{N} \frac{\sqrt{3}\sigma^2}{4\gamma} \times
\\ \nonumber & &
\mathbb{E} \left\{
\left[1-e^{2(\gamma/m)(\mu_n + T_n'(\omega)-t)}\right]^2 \,
\mathlarger{\mathlarger{\mathbbm{1}}}_{\mu_n + T_n'(\omega)<t}
\right\}^{1/2} \delta^{1/2}
\\ \nonumber
&\le& \left[ m \mathbb{E}\left( V_0^4 \right)^{1/2}
+ \frac{\sqrt{3}\sigma^2}{2\gamma}
\left( \frac{N}{2}+1 \right) \right] \delta^{1/2} \\ \nonumber
&<& \varepsilon
\end{eqnarray}
for any $\varepsilon>0$ provided
$$
\delta< \frac{\varepsilon^2}{\left[ m \mathbb{E}\left( V_0^4 \right)^{1/2}
+ \dfrac{\sqrt{3}\sigma^2}{2\gamma}
\left( \dfrac{N}{2}+1 \right) \right]^2},
$$
where we have used~\eqref{young}, H\"older inequality, the tower property, and the normal
distribution of the Wiener integrals
\begin{eqnarray}\nonumber
\int_0^t e^{(\gamma/m)(s-t)} \, dW_s &\sim&
\mathcal{N}\left(0,\frac{m}{2\gamma}\left[1-e^{-2(\gamma/m)t}\right]\right), \\ \nonumber
\int_{\mu_n+T_n'(\omega)}^{t} e^{(\gamma/m)(s-t)} \, dW_s \,
\Bigg| \mathcal{F}_{\mu_n + T_n'(\omega)} &\sim&
\mathcal{N}\left(0,\frac{m}{2\gamma}\left[1-e^{2(\gamma/m)(\mu_n + T_n'(\omega)-t)}\right]\right).
\end{eqnarray}
Consequently there follows uniform integrability, and by the Vitali convergence theorem~\cite{folland} we conclude
\begin{eqnarray}\nonumber
\lim_{t \nearrow \infty} \mathbb{E}\left( |K_t| \right) &=& \mathbb{E}\left( \lim_{t \nearrow \infty} |K_t| \right) \\ \nonumber
&=& 0.
\end{eqnarray}
We employ an analogous argument for~\eqref{kt2}
\begin{eqnarray}\nonumber
\mathbb{E}\left( \, |K_t| \, \mathlarger{\mathlarger{\mathbbm{1}}}_{\mathcal{E}} \right)
&\le& \mathbb{E} \left\{ m e^{-2(\gamma/m) t} V_0^2 \, \mathlarger{\mathlarger{\mathbbm{1}}}_{\mathcal{E}}
+ \frac{\sigma^2}{m} \left[ \int_0^t e^{(\gamma/m)(s-t)} \, dW_s \right]^2 \mathlarger{\mathlarger{\mathbbm{1}}}_{\mathcal{E}}
\right\} \\ \nonumber
&\le& m e^{-2(\gamma/m) t} \mathbb{E}(V_0^4)^{1/2} \text{Prob}(\mathcal{E})^{1/2} \\ \nonumber
&& + \frac{\sigma^2}{m} \mathbb{E} \left\{ \left[ \int_0^t e^{(\gamma/m)(s-t)} \, dW_s \right]^4 \right\}^{1/2} \text{Prob}(\mathcal{E})^{1/2} \\ \nonumber
&\le& m \mathbb{E}(V_0^4)^{1/2} \delta^{1/2}
+ \frac{\sqrt{3}\sigma^2}{2\gamma} \left[1-e^{-2(\gamma/m)t}\right] \delta^{1/2} \\ \nonumber
&\le& \left[ m \mathbb{E}(V_0^4)^{1/2} + \frac{\sqrt{3}\sigma^2}{2\gamma} \right] \delta^{1/2} \\ \nonumber
&<& \varepsilon,
\end{eqnarray}
for any $\varepsilon>0$ provided
$$
\delta < \frac{\varepsilon^2}{\left[ m \mathbb{E}(V_0^4)^{1/2} + \dfrac{\sqrt{3}\sigma^2}{2\gamma} \right]^2}.
$$
By the same convergence theorem we conclude
\begin{equation}\nonumber
\lim_{t \nearrow \infty} \mathbb{E}\left( |K_t| \right) = 0.
\end{equation}
Finally, in the case of~\eqref{kt3}, we have the estimate
\begin{eqnarray}\nonumber
\mathbb{E}\left( \, |K_t| \, \mathlarger{\mathlarger{\mathbbm{1}}}_{\mathcal{E}} \right)
&\le& \frac{\sigma^2}{2m} \mathbb{E} \left\{ \left[ \int_{\bar{\mu}_0}^{t} e^{(\gamma/m)(s-t)} \, dW_s \right]^2 \,
\mathlarger{\mathlarger{\mathbbm{1}}}_{\bar{\mu}_0 < t} \, \mathlarger{\mathlarger{\mathbbm{1}}}_{\mathcal{E}} \right\} \\ \nonumber & & +
\sum_{n=1}^{N} \frac{\sigma^2}{2m} \mathbb{E} \left\{ \left[ \int_{\bar{\mu}_n+S_n'(\omega)}^{t} e^{(\gamma/m)(s-t)} \, dW_s \right]^2 \,
\mathlarger{\mathlarger{\mathbbm{1}}}_{\bar{\mu}_n + S_n'(\omega)<t}
\, \mathlarger{\mathlarger{\mathbbm{1}}}_{\mathcal{E}} \right\} \\ \nonumber
&\le& \frac{\sigma^2}{2m} \mathbb{E} \left\{ \left[ \int_{\bar{\mu}_0}^{t} e^{(\gamma/m)(s-t)} \, dW_s \right]^4 \,
\mathlarger{\mathlarger{\mathbbm{1}}}_{\bar{\mu}_0 < t} \right\}^{1/2} \text{Prob}(\mathcal{E})^{1/2} \\ \nonumber & & +
\sum_{n=1}^{N} \frac{\sigma^2}{2m} \mathbb{E} \left\{ \left[ \int_{\bar{\mu}_n+S_n'(\omega)}^{t} e^{(\gamma/m)(s-t)} \, dW_s \right]^4 \,
\mathlarger{\mathlarger{\mathbbm{1}}}_{\bar{\mu}_n + S_n'(\omega)<t}
\right\}^{1/2} \text{Prob}(\mathcal{E})^{1/2} \\ \nonumber
&=& \frac{\sigma^2}{2m} \mathbb{E} \left\{ \mathbb{E} \left\{ \left[ \int_{\bar{\mu}_0}^{t} e^{(\gamma/m)(s-t)} \, dW_s \right]^4 \Bigg| \mathcal{F}_{\bar{\mu}_0} \right\}
\mathlarger{\mathlarger{\mathbbm{1}}}_{\bar{\mu}_0 < t} \right\}^{1/2} \delta^{1/2}
+ \sum_{n=1}^{N} \frac{\sigma^2}{2m} \times
\\ \nonumber & &
\mathbb{E} \left\{ \mathbb{E} \left\{ \left[ \int_{\bar{\mu}_n+S_n'(\omega)}^{t} e^{(\gamma/m)(s-t)} \, dW_s \right]^4 \Bigg| \mathcal{F}_{\bar{\mu}_n + S_n'(\omega)} \right\}
\mathlarger{\mathlarger{\mathbbm{1}}}_{\bar{\mu}_n + S_n'(\omega)<t} \right\}^{1/2} \delta^{1/2} \\ \nonumber
&=& \frac{\sqrt{3}\sigma^2}{2m} \mathbb{E} \left\{ \left[ \int_{\bar{\mu}_0}^{t} e^{2(\gamma/m)(s-t)} \, ds \right]^2
\mathlarger{\mathlarger{\mathbbm{1}}}_{\bar{\mu}_0 < t} \right\}^{1/2} \delta^{1/2} \\ \nonumber & & +
\sum_{n=1}^{N} \frac{\sqrt{3}\sigma^2}{2m} \mathbb{E} \left\{ \left[
\int_{\bar{\mu}_n+S_n'(\omega)}^{t} e^{2(\gamma/m)(s-t)} \, ds \right]^2
\mathlarger{\mathlarger{\mathbbm{1}}}_{\bar{\mu}_n + S_n'(\omega)<t} \right\}^{1/2} \delta^{1/2} \\ \nonumber
&=& \frac{\sqrt{3}\sigma^2}{4\gamma} \mathbb{E} \left\{ \left[ 1-e^{2(\gamma/m)(\bar{\mu}_0 -t)} \right]^2
\mathlarger{\mathlarger{\mathbbm{1}}}_{\bar{\mu}_0 < t} \right\}^{1/2} \delta^{1/2} \\ \nonumber & & +
\sum_{n=1}^{N} \frac{\sqrt{3}\sigma^2}{4 \gamma} \mathbb{E} \left\{
\left[ 1-e^{2(\gamma/m)(\bar{\mu}_n + S_n'(\omega) -t)} \right]^{2}
\mathlarger{\mathlarger{\mathbbm{1}}}_{\bar{\mu}_n + S_n'(\omega)<t} \right\}^{1/2} \delta^{1/2} \\ \nonumber
&\le& \frac{\sqrt{3}\sigma^2}{4 \gamma} \left( N+1 \right) \delta^{1/2} \\ \nonumber
&<& \varepsilon,
\end{eqnarray}
for any $\varepsilon>0$ whenever
$$
\delta < \frac{16 \gamma^2 \varepsilon^2}{3 \sigma^4 \left( N+1 \right)^2},
$$
where we have employed H\"older inequality, the tower property, and the fact
that the Wiener integrals have the Gaussian distribution
\begin{eqnarray}\nonumber
\int_{\bar{\mu}_0}^t e^{(\gamma/m)(s-t)} \, dW_s \, \Bigg| \mathcal{F}_{\bar{\mu}_0} &\sim&
\mathcal{N}\left(0,\frac{m}{2\gamma}\left[1-e^{2(\gamma/m)(\bar{\mu}_0-t)}\right]\right), \\ \nonumber
\int_{\bar{\mu}_n+S_n'(\omega)}^{t} e^{(\gamma/m)(s-t)} \, dW_s \,
\Bigg| \mathcal{F}_{\bar{\mu}_n + S_n'(\omega)} &\sim&
\mathcal{N}\left(0,\frac{m}{2\gamma}\left[1-e^{2(\gamma/m)(\bar{\mu}_n + S_n'(\omega)-t)}\right]\right).
\end{eqnarray}
Again by the Vitali convergence theorem our conclusion is
\begin{equation}\nonumber
\lim_{t \nearrow \infty} \mathbb{E}\left( |K_t| \right) = 0.
\end{equation}
\end{proof}

Note on one hand that the long time behavior of the physical solutions was computed in section~\ref{kinetic}, where we found
\begin{equation}\nonumber
\lim_{t \nearrow \infty} \mathbb{E}(K_t) = \frac{\sigma^2}{4\gamma}.
\end{equation}
On the other hand, the equipartition theorem of classical statistical mechanics~\cite{pb} imposes
\begin{equation}\nonumber
\lim_{t \nearrow \infty} \mathbb{E}(K_t) = \frac{k_B \, T}{2},
\end{equation}
where $k_B$ is Boltzmann constant and $T$ is the absolute temperature.
The agreement between both is mediated by the fluctuation-dissipation relation~\cite{gillespie}
$$
\sigma^2 = 2 \, k_B \, T \, \gamma.
$$
The asymptotic behavior described by Theorem~\ref{thltb}, which is uniform in the parameter values, is however inconsistent with these.
We thus conclude that the long time behavior of this class of spurious solutions is not physical.

\section{Conclusions}

We can summarize some of our results (precisely, some of the results of section~\ref{direct}) in the following two statements.
The first one is concerned with the solutions to the It\^o equation for the kinetic energy of the Langevin particle.

\begin{theorem}
The stochastic differential equation
\begin{equation}\nonumber
d K_t = \frac{\sigma^2}{2m} dt -2\frac{\gamma}{m} \, K_t \, dt + \sqrt{2 \frac{\sigma^2}{m} \, K_t} \, dW_t,
\qquad \left. K_t \right|_{t=0}= \frac12 m V_0^2,
\end{equation}
admits the unique solution
\begin{equation}\label{us}
K_t = \frac12 m \left[ e^{-(\gamma/m) t} V_0 + \frac{\sigma}{m} \, \int_0^t e^{(\gamma/m)(s-t)} \, dW_s \right]^2.
\end{equation}
\end{theorem}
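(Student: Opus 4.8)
The plan is to prove the two halves, existence and uniqueness, by independent arguments and then combine them. Existence I would obtain by identifying \eqref{us} as a smooth function of the Ornstein--Uhlenbeck velocity and applying It\^o's formula, while uniqueness I would reduce to the Yamada--Watanabe theorem already invoked in section~\ref{direct}, whose hypotheses are met because the drift of the equation is affine and its diffusion coefficient is the square root.

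For the existence half, recall that $V_t$ solves $dV_t = -(\gamma/m)\,V_t\,dt + (\sigma/m)\,dW_t$ and that \eqref{us} is precisely $K_t = \tfrac12 m V_t^2$. With $g(v)=\tfrac12 m v^2$, so that $g'(v)=mv$ and $g''(v)=m$, It\^o's formula and the identity $m V_t^2 = 2K_t$ give
\[
dK_t = m V_t\,dV_t + \tfrac12 m\,(dV_t)^2 = \frac{\sigma^2}{2m}\,dt - \frac{2\gamma}{m}\,K_t\,dt + \sigma V_t\,dW_t .
\]
The drift here coincides exactly with that of the equation. The noise term, however, is $\sigma V_t$, whereas the equation prescribes $\sqrt{2\sigma^2 K_t/m} = \sigma|V_t|$; the two differ by the sign of $V_t$, and reconciling them is the one genuinely delicate point of the whole argument. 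I would resolve it by introducing $B_t := \int_0^t \operatorname{sgn}(V_s)\,dW_s$, which is a continuous local martingale with quadratic variation $\langle B\rangle_t = t$ and hence a Brownian motion by L\'evy's characterization; then $\sigma V_t\,dW_t = \sigma|V_t|\,dB_t = \sqrt{2\sigma^2 K_t/m}\,dB_t$, so that $(K_t,B_t)$ is a weak solution of the equation with the prescribed initial datum $\tfrac12 m V_0^2$.

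For the uniqueness half I would verify the Yamada--Watanabe hypotheses directly. The drift $k\mapsto \sigma^2/(2m) - (2\gamma/m)k$ is globally Lipschitz, and the diffusion coefficient $k\mapsto \sqrt{2\sigma^2 k/m}$ on $[0,\infty)$ obeys $|\sqrt{k_1}-\sqrt{k_2}|\le \sqrt{|k_1-k_2|}$, so the Yamada--Watanabe modulus $\rho(r)=\mathrm{const}\cdot\sqrt{r}$ satisfies $\int_{0+}\rho(r)^{-2}\,dr = +\infty$. This delivers pathwise uniqueness on the state space $[0,\infty)$, where a comparison argument---using that the drift $\sigma^2/(2m)>0$ points inward at the boundary---keeps solutions nonnegative so that the square root stays well defined. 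Pathwise uniqueness together with the weak existence established above yields, through the Yamada--Watanabe principle, a unique strong solution and uniqueness in law; since \eqref{us} realizes a solution with the correct initial condition, it is this unique solution. I expect the sign bookkeeping in the second paragraph to be the main obstacle, the Yamada--Watanabe verification being routine once the coefficients are written out.
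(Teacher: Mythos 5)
Your proposal is correct and follows essentially the same route as the paper: the paper's own justification, given in Section~\ref{direct}, is precisely the combination of the Watanabe--Yamada theorem for pathwise uniqueness with a direct It\^o-calculus verification that \eqref{us} solves the equation. The only difference is that you spell out the sign bookkeeping (passing from $\sigma V_t\,dW_t$ to $\sigma|V_t|\,dB_t$ via L\'evy's characterization, so that \eqref{us} is exhibited as a weak solution), a point the paper compresses into the phrase ``a simple exercise of stochastic calculus''; your version is, if anything, the more careful of the two.
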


While the second one refers to the solutions of the Stratonovich equation for the same quantity.

\begin{theorem}
The stochastic differential equation
\begin{equation}\nonumber
d K_t = -2\frac{\gamma}{m} \, K_t \, dt + \sqrt{2 \frac{\sigma^2}{m} \, K_t} \circ dW_t, \qquad \left. K_t \right|_{t=0}= \frac12 m V_0^2,
\end{equation}
admits infinitely many solutions, and the solution set includes~\eqref{us} along with the family
\begin{eqnarray}\nonumber
K_t &=& \frac12 m \left[ e^{-(\gamma/m) t} V_0 + \frac{\sigma}{m} \, \int_0^t e^{(\gamma/m)(s-t)} \, dW_s \right]^2
\, \mathlarger{\mathlarger{\mathbbm{1}}}_{t < T_1(\omega)}
\\ \nonumber & & +
\sum_{n=1}^{\infty} \frac{\sigma^2}{2m} \left[ \int_{\lambda_n+T_n(\omega)}^{t} e^{(\gamma/m)(s-t)} \, dW_s \right]^2 \,
\mathlarger{\mathlarger{\mathbbm{1}}}_{\lambda_n + T_n(\omega)<t<T_{n+1}(\omega)},
\end{eqnarray}
if $V_0(\omega) \neq 0$, for any sequence $\{\lambda_n\}_{n=1}^\infty$ of almost surely non-negative,
$L^0(\Omega)$, and $\mathcal{F}_{T_n(\omega)}-$measurable random variables, where
$$
T_1 := \inf \{ t>0 : K_t=0 \}, \quad T_n := \inf \{ t>T_{n-1} + \lambda_{n-1} + \tau_{n-1} : K_t=0 \}, \quad n=2,3,\cdots,
$$
with $\{\tau_n\}_{n=0}^\infty$ an arbitrary sequence of almost surely positive,
$L^0(\Omega)$, and $\mathcal{F}_{T_{n}(\omega)}-$measurable random variables,
and
\begin{eqnarray}\nonumber
K_t &=& \frac{\sigma^2}{2m} \left[ \int_{\bar{\lambda}_0}^{t} e^{(\gamma/m)(s-t)} \, dW_s \right]^2 \,
\mathlarger{\mathlarger{\mathbbm{1}}}_{\bar{\lambda}_0 < t < S_1(\omega)} \\ \nonumber & & +
\sum_{n=1}^{\infty} \frac{\sigma^2}{2m} \left[ \int_{\bar{\lambda}_n+S_n(\omega)}^{t} e^{(\gamma/m)(s-t)} \, dW_s \right]^2 \,
\mathlarger{\mathlarger{\mathbbm{1}}}_{\bar{\lambda}_n + S_n(\omega)<t<S_{n+1}(\omega)},
\end{eqnarray}
if $V_0(\omega) = 0$,
for any sequence $\{\bar{\lambda}_n\}_{n=0}^\infty$ of non-negative, $L^0(\Omega)$,
and $\mathcal{F}_{S_n(\omega)}-$measurable random variables, and where
\begin{equation}\nonumber
S_n := \inf \{ t>S_{n-1} + \bar{\lambda}_{n-1} + \bar{\tau}_{n-1} : K_t=0 \}, \quad n=1,2,\cdots,
\end{equation}
with $S_0 = \bar{\lambda}_{-1} + \bar{\tau}_{-1}$,
where $\{\bar{\tau}_n\}_{n=0}^\infty$ is an arbitrary sequence of almost surely positive,
$L^0(\Omega)$, and $\mathcal{F}_{S_{n}(\omega)}-$measurable random variables;
also, $\bar{\lambda}_{-1}$ is an arbitrary almost surely non-negative,
$L^0(\Omega)$, and $\mathcal{F}_{0}-$measurable random variable,
and $\bar{\tau}_{-1}$ is an almost surely positive,
$L^0(\Omega)$, and $\mathcal{F}_{\bar{\lambda}_{-1}}-$measurable random variable.
\end{theorem}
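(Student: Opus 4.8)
The plan is to assemble the statement from the explicit constructions of Section~\ref{direct}, combined with the hitting-time analysis of Theorem~\ref{thmfpt} and Corollary~\ref{zeroas}. First I would record that~\eqref{us} belongs to the solution set: the verification that~\eqref{kenergy} solves~\eqref{kstrat}, already noted in Section~\ref{direct} as a direct computation, gives this at once. Next I would observe that wherever $K_t>0$ the drift $-2(\gamma/m)K$ and the diffusion $\sqrt{2(\sigma^2/m)K}$ of~\eqref{kstrat} are locally Lipschitz, so the classical existence and uniqueness theorem applies on $\{K_t>0\}$; it is precisely the non-differentiability of the diffusion at the origin that prevents this uniqueness from extending through $K=0$. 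Consequently every solution issued from $K_0=\frac12 m V_0^2>0$ must coincide with~\eqref{us} on $[0,T_1)$, where $T_1=\inf\{t>0:K_t=0\}$, and by Corollary~\ref{zeroas} this first hitting time is finite almost surely.

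The substance of the proof is the excursion-and-restart construction. I would argue that $K_t\equiv 0$ solves~\eqref{kstrat} and that $\{K_t=0\}$ is an absorbing state, since both coefficients vanish there. Thus, having reached $0$ at $T_1$, a solution is free either to remain at $0$ or, after an arbitrary sojourn, to launch a fresh excursion. Each such excursion on an interval $(\lambda_n+T_n,T_{n+1})$ is a copy of~\eqref{kenergy} with vanishing initial velocity started at the stopping time $\lambda_n+T_n$; it solves~\eqref{kstrat} locally by the same reasoning that shows~\eqref{us} solves~\eqref{kstrat}, and it vanishes at both endpoints of its excursion interval. Concatenating the physical piece on $[0,T_1)$, the sojourns at $0$, and the successive excursions produces exactly the families written in the statement for $V_0(\omega)\neq 0$; the families for $V_0(\omega)=0$ arise identically, except that one begins at the absorbing state and may insert an initial waiting time before the first excursion. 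Letting the waiting parameters range over the admissible random variables yields uncountably many pairwise distinct trajectories, whence infinitely many solutions.

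The main obstacle is to certify that these concatenations are genuine global Stratonovich solutions rather than mere patchworks of local ones. I would check that the glued process is almost surely continuous at every junction $T_n$ — which holds because it equals $0$ there and each new excursion also emanates from $0$ — that it is adapted to $\{\mathcal{F}_t\}_{t\ge 0}$, and that the Stratonovich integral over $[0,t]$ decomposes additively along the random partition cut out by the stopping times, so that the integral form of~\eqref{kstrat} holding on each subinterval forces it to hold globally. This additivity is where the measurability hypotheses become indispensable: requiring each $\lambda_n$ and $\tau_n$ to be almost surely positive, $L^0(\Omega)$, and $\mathcal{F}_{T_n(\omega)}$-measurable is what makes the $T_n$ bona fide stopping times and lets each excursion be restarted using only information available at its launch, via the strong Markov property. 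Verifying this measurability bookkeeping carefully, rather than any single estimate, is the delicate part of the argument.
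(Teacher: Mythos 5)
Your proposal follows essentially the same route as the paper, whose justification of this theorem is precisely the construction carried out in Section~\ref{direct}: direct verification that~\eqref{us} solves the Stratonovich equation, absorption at $K=0$, classical existence and uniqueness while $K_t>0$ up to the hitting time $T_1$ (with finiteness supplied by Corollary~\ref{zeroas}), and concatenation of excursions launched after arbitrary waiting times at the absorbing state. The extra care you take with the gluing step (continuity at the junctions, adaptedness, and additivity of the stochastic integral over the random partition determined by the stopping times) is left implicit in the paper but does not alter the argument.
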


More solutions can be found in section~\ref{direct}, among which let us focus on
\begin{equation}\nonumber
K_t = \frac12 m \left[ e^{-(\gamma/m) t} V_0 + \frac{\sigma}{m} \, \int_0^t e^{(\gamma/m)(s-t)} \, dW_s \right]^2
\, \mathlarger{\mathlarger{\mathbbm{1}}}_{t < T_1(\omega)}
\end{equation}
that fulfils both
$$
\lim_{t \to \infty} \mathbb{E}(K_t) = 0
$$
by Theorem~\ref{thltb}, and
$$
\lim_{t \to \infty} K_t = 0 \qquad \text{almost surely}
$$
by Corollary~\ref{zeroas}. The first mode of convergence shows the impossibility for this solution to replicate the results in section~\ref{kinetic};
moreover it is discussed in section~\ref{ltb} how this long time behavior is inconsistent with the equipartition of energy and the
fluctuation-dissipation relation. The second mode of convergence implies convergence in distribution, and therefore the violation of the
Maxwell-Boltzmann distribution of the velocity~\cite{gillespie}, which in turn implies that the kinetic energy should be chi-squared distributed.
It is then clear that this is a spurious rather than a physical solution. The same results were proven for a class of solutions
in section~\ref{ltb}. In general, one can see that all of the solutions, except
the unique solution of the It\^o equation, imply that the Langevin particle is at rest, with null kinetic energy, during some time intervals,
in spite of the presence of non-vanishing thermal fluctuations. Obviously, this is not a physical effect.

The formal stochastic differential equation
$$
\frac{dX_t}{dt}=f(X_t) + g(X_t)\xi_t,
$$
where $\xi_t$ is Gaussian white noise, is a \emph{pre-equation} following van Kampen~\cite{kampen},
and it only becomes an actual equation when a suitable notion of stochastic integral is added.
If this notion is not provided, then at best this pre-equation would admit multiple solutions, at least
one for each possible interpretation of noise. However, the situation for the stochastic differential equation
\begin{equation}\nonumber
d K_t = -2\frac{\gamma}{m} \, K_t \, dt + \sqrt{2 \frac{\sigma^2}{m} \, K_t} \circ dW_t, \qquad \left. K_t \right|_{t=0}= \frac12 m V_0^2,
\end{equation}
is not absolutely different, as it admits infinitely many solutions, only one of which has physical meaning.
Consequently, this is not a valid model to describe the kinetic energy of the Langevin particle,
at least if some further prescription is not added in order to select the physical solution. Such a situation is
not new in finance, where models with multiple solutions have been studied, and the right solution has been
selected by the addition of a new requirement, such as the no-arbitrage assumption~\cite{ander}.
In the present case, considering the Stratonovich stochastic differential equation along with the additional prescription
to ensure the physical character of the unique (by prescription) solution, would be equivalent to directly consider the It\^o equation
\begin{equation}\nonumber
d K_t = \frac{\sigma^2}{2m} dt -2\frac{\gamma}{m} \, K_t \, dt + \sqrt{2 \frac{\sigma^2}{m} \, K_t} \, dW_t,
\qquad \left. K_t \right|_{t=0}= \frac12 m V_0^2.
\end{equation}

In~\cite{kampen} van Kampen studied the direct computation of the kinetic energy of the Langevin particle using the same It\^o and
Stratonovich equations that have been considered herein. The discussion was based on a previous reference~\cite{west}, which claimed the superiority
of the Stratonovich over the It\^o interpretation of noise to compute this quantity. On the other hand, van Kampen claimed the equality
of both approaches, but did not consider the spurious solutions. Herein we have observed a certain advantage of the use of the It\^o
interpretation, as it has not to be supplemented with additional conditions in order to assure the uniqueness of solution.

Overall, van Kampen in~\cite{kampen} concludes that, from a methodological viewpoint, one can use both the It\^o and Stratonovich stochastic differential
equations to model physical systems. From a physical viewpoint, however, he prefers the Stratonovich interpretation whenever the fluctuations are
external. His conclusions were supported 30 years later in~\cite{mmcc}, where the authors claim that the Stratonovich interpretation should be preferred
in the case of a continuous physical system. In this work we have dealt with a continuous physical system influenced by external fluctuations; in fact
a system studied in~\cite{kampen}. We have shown that for this system the Stratonovich interpretation presents an infinite set of spurious solutions
that are not present in the It\^o case. Although this is not a fundamental difficulty, as one can add additional conditions in order to select the
right physical solution in the case of the Stratonovich equation, it is a fact that makes somewhat simpler the It\^o approach. The conclusions
in~\cite{kampen} and~\cite{mmcc} are useful as general guidelines for the modeler, but some of them have to be taken \emph{cum grano salis}.
Physical modeling is crucial in order to select the right interpretation of noise, but the final selection has to be done problemwise;
all in all part of the charm of complex systems is that they rebel against general rules. And just as crucial as physical facts are stochastic
analytical facts. In particular, when one chooses the interpretation of noise in a given problem, one should not disregard neither the validity of
the Watanabe-Yamada theorem for It\^o stochastic differential equations, nor the impossibility to extend it for the Stratonovich ones.

\section*{Acknowledgements}

This work has been partially supported by the Government of Spain (Ministerio de Ciencia, Innovaci\'on y Universidades)
through Project PGC2018-097704-B-I00.

\newpage
\vskip5mm
\noindent
{\footnotesize
Carlos Escudero\par\noindent
Departamento de Matem\'aticas Fundamentales\par\noindent
Universidad Nacional de Educaci\'on a Distancia\par\noindent
{\tt cescudero@mat.uned.es}\par\vskip1mm\noindent
}
\end{document}